\numberwithin{equation}{section}
\theoremstyle{definition}
\newtheorem{rmk}[equation]{Remark}
\newtheorem{exm}[equation]{Example}
\newtheorem*{dfn*}{Definition}
\newtheorem*{axm*}{Axiom}
\newtheorem*{ntn*}{Notation}
\newtheorem*{exm*}{Example}
\newtheorem*{exr*}{Exercise}
\newtheorem*{int*}{Intuition}
\newtheorem*{qst*}{Question}
\newtheorem*{rmk*}{Remark}
\theoremstyle{plain}
\newtheorem{thm}[equation]{Theorem}
\newtheorem{prop}[equation]{Proposition}
\newtheorem{lem}[equation]{Lemma}
\newtheorem{cor}[equation]{Corollary}
\newtheorem*{thm*}{Theorem}
\newtheorem*{prop*}{Proposition}
\newtheorem*{cor*}{Corollary}
\newtheorem*{lem*}{Lemma}
\newtheorem*{cnj*}{Conjecture}
\let\oldwidetilde\widetilde
\protected\def\widetilde{\oldwidetilde}
\DeclareMathOperator*{\colim}{\mathrm{colim}}
\DeclareMathOperator{\Map}{\mathrm{Map}}
\newcommand{\F}{\mathbb{F}}
\newcommand{\A}{\mathbb{A}}
\DeclareMathOperator{\Sp}{\mathrm{Sp}}
\newcommand{\Z}{\mathbb{Z}}
\DeclarePairedDelimiter\abs{\lvert}{\rvert}%
\let\oldabs\abs
\def\abs{\@ifstar{\oldabs}{\oldabs*}}
\let\oldtocsection=\tocsection
\let\oldtocsubsection=\tocsubsection
\let\oldtocsubsubsection=\tocsubsubsection
\renewcommand{\tocsection}[2]{\hspace{0em}\oldtocsection{#1}{#2}}
\renewcommand{\tocsubsection}[2]{\hspace{1em}\oldtocsubsection{#1}{#2}}
\renewcommand{\tocsubsubsection}[2]{\hspace{2em}\oldtocsubsubsection{#1}{#2}}
\let\scr=\mathcal
\def\cd{\mathrm{cd}}
\def\comp{\wedge}
\newcommand{\SH}{\mathcal{SH}}
\newcommand{\Spc}{\mathcal{S}\mathrm{pc}}
\newcommand{\et}{{\acute{e}t}}
\newcommand{\ret}{{r\acute{e}t}}
\newcommand{\wequi}{\simeq}
\newcommand{\1}{\mathbbm{1}}
\newcommand{\Gm}{{\mathbb{G}_m}}
\def\PSh{\mathcal{P}}
\def\ph{\mathord-}
\def\op{\mathrm{op}}
\def\Cat{\mathcal{C}\mathrm{at}{}}
\def\CAlg{\mathrm{CAlg}}
\def\Sm{{\mathcal{S}\mathrm{m}}}
\def\Sch{{\mathcal{S}\mathrm{ch}}}
\newcommand{\Mod}{\mathrm{Mod}}
\DeclareMathOperator{\id}{\mathrm{id}}
\def\Et{\mathcal{E}\mathrm{t}}
\def\mot{\mathrm{mot}}
\renewcommand{\todo}[1]{}
\newcommand{\NB}[1]{}
\newcommand{\NB}[1]{\todo[color=gray!40]{#1}}
\title{Remarks on étale motivic stable homotopy theory}
\date{\today}
\author{Tom Bachmann}
\address{Mathematischen Institut, LMU Munich, Germany}
\email{tom.bachmann@zoho.com}
\let\@wraptoccontribs\wraptoccontribs
\begin{document}

\maketitle

\setcounter{tocdepth}{1}
\tableofcontents

\section{Introduction}
We strengthen some results in étale (and real étale) motivic stable homotopy theory, by eliminating finiteness hypotheses, additional localizations and/or extending to spectra from $H\Z$-modules.
The main results are Theorem \ref{thm:infinite} (proving rigidity in $\ell$-adic étale motivic stable homotopy theory for any $\Z[1/\ell]$-scheme), Theorem \ref{thm:real} (proving rigidity in real étale motivic stable homotopy theory for any scheme) and Theorem \ref{thm:marc} (proving $p$-periodicity in étale motivic stable homotopy theory of characteristic $p$-schemes).

\subsection*{Notation and conventions}
Given a scheme $S$, we denote by $S_\et^\comp$ its hypercompleted étale $\infty$-topos and by $\Sp(S_\et^\comp)$ the stabilization thereof.
In contrast, we denote by $\SH_\et^\comp(S)$ the localization of the motivic stable category $\SH(S)$ at the (desuspended) étale hypercovers, and similarly denote by $\SH_\et^{\comp S^1}(S)$ the localization of motivic $S^1$-spectra.
We also write $\Sp(\Sm_{S,\et})$ for the category of spectral étale sheaves on the site $\Sm_S$, and $\Sp(\Sm_{S,\et}^\comp)$ for its hypercompletion.
Given a stable $\infty$-category $\scr C$, we denote by $\scr C_p^\comp$ the subcategory of $p$-complete objects.

We freely use the language and notation of \cite{HTT,HA}.

\section{Invertible retracts of symmetric objects}
\begin{prop} \label{prop:inv-retract}
Let $\scr C$ be a symmetric monoidal $\infty$-category, $I \in \scr C$ invertible and $I \xrightarrow{e} X \xrightarrow{f} I$ a retraction.
If $X$ is symmetric (i.e. the cyclic permutation on $X^{\otimes n}$ is homotopic to the identity for some $n \ge 2$) then $e$ and $f$ are inverse equivalences.
\end{prop}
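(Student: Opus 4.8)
The plan is to reduce to the case where $I$ is the unit $\1$, and then to exhibit $e\circ f$ in the form $\beta\circ\sigma\circ\alpha$ for suitable maps $\alpha\colon X\to X^{\otimes n}$ and $\beta\colon X^{\otimes n}\to X$, with $\sigma$ the cyclic permutation; since $\sigma\simeq\id$ and (as will be checked) $\beta\circ\alpha\simeq\id_X$, this forces $e\circ f\simeq\id_X$, and together with the hypothesis $f\circ e\simeq\id_I$ it makes $e$ and $f$ inverse equivalences.

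\emph{Reduction to $I=\1$.} Since $I$ is invertible, the functor $I^{-1}\otimes(-)\colon\scr C\to\scr C$ is an equivalence, hence conservative, and it carries the given retraction to a retraction $\1\simeq I^{-1}\otimes I\xrightarrow{\id\otimes e}I^{-1}\otimes X\xrightarrow{\id\otimes f}I^{-1}\otimes I\simeq\1$. So it suffices to treat the case $I=\1$, provided $I^{-1}\otimes X$ is still symmetric for the same $n$. Writing $\sigma_{(-)}$ for the cyclic permutation on $n$-fold tensor powers (natural in its argument and compatible with $\otimes$), naturality applied to $e$ and to $f$ gives $f^{\otimes n}\circ\sigma_X\circ e^{\otimes n}\simeq f^{\otimes n}\circ e^{\otimes n}\circ\sigma_I\simeq\sigma_I$, using $f^{\otimes n}\circ e^{\otimes n}\simeq(f\circ e)^{\otimes n}\simeq\id$; but $\sigma_X\simeq\id$ makes the same composite $\simeq f^{\otimes n}\circ e^{\otimes n}\simeq\id$, so $\sigma_I\simeq\id_{I^{\otimes n}}$. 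Next, $I\otimes I^{-1}\simeq\1$ and the cyclic permutation of $\1^{\otimes n}$ is homotopic to the identity, so $\sigma_{I\otimes I^{-1}}\simeq\id$; under the shuffle equivalence $(I\otimes I^{-1})^{\otimes n}\simeq I^{\otimes n}\otimes(I^{-1})^{\otimes n}$ this says $\sigma_I\otimes\sigma_{I^{-1}}\simeq\id$, and combined with $\sigma_I\simeq\id$ and the invertibility of $I^{\otimes n}$ it gives $\sigma_{I^{-1}}\simeq\id$. Then $\sigma$ on $(I^{-1}\otimes X)^{\otimes n}\simeq(I^{-1})^{\otimes n}\otimes X^{\otimes n}$ is $\sigma_{I^{-1}}\otimes\sigma_X\simeq\id$, as needed.

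\emph{The main computation.} Assume now $I=\1$, so $e\colon\1\to X$, $f\colon X\to\1$, $f\circ e\simeq\id_\1$, and let $\sigma\colon X^{\otimes n}\to X^{\otimes n}$ be the cyclic permutation sending the $i$-th factor to the $(i+1)$-st (indices mod $n$), so $\sigma\simeq\id$ by hypothesis; here we use $n\ge 2$. Using the unit isomorphism $X\simeq X\otimes\1^{\otimes(n-1)}$, set
\[
\alpha\;=\;\id_X\otimes e^{\otimes(n-1)}\colon X\longrightarrow X^{\otimes n},\qquad
\beta\;=\;\id_X\otimes f^{\otimes(n-1)}\colon X^{\otimes n}\longrightarrow X.
\]
On the one hand $\beta\circ\alpha$ is $\id_X$ on the first tensor factor and $f\circ e\simeq\id_\1$ on each of the remaining $n-1$ factors, so $\beta\circ\alpha\simeq\id_X$ and hence $\beta\circ\sigma\circ\alpha\simeq\beta\circ\alpha\simeq\id_X$. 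On the other hand $\beta\circ\sigma\circ\alpha\simeq e\circ f$: the permutation $\sigma$ carries the first factor of $X^{\otimes n}$, on which $\alpha$ is $\id_X$, into the second factor, on which $\beta$ applies $f$, and carries the last factor, on which $\alpha$ is $e\colon\1\to X$, into the first, on which $\beta$ applies $\id_X$ and records the output, while the remaining $n-2$ factors contribute $f\circ e\simeq\id_\1$; unwinding the unit coherences, this identifies $\beta\circ\sigma\circ\alpha$ with the composite $X\simeq X\otimes\1^{\otimes(n-1)}\xrightarrow{\,f\otimes\id_{\1^{\otimes(n-2)}}\otimes e\,}\1^{\otimes(n-1)}\otimes X\simeq X$, which is $e\circ f$. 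Comparing the two expressions yields $e\circ f\simeq\id_X$, and undoing the reduction completes the argument.

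\emph{Main obstacle.} The only delicate step is the identification $\beta\circ\sigma\circ\alpha\simeq e\circ f$: since we work in an $\infty$-category one cannot literally permute tensor factors, so this is a routine but somewhat fiddly manipulation of the symmetric monoidal coherence data — cleanest to verify first for $n=2$, where it reduces to naturality of the symmetry $\sigma_{X,X}$ with respect to $e$ and to $f$ together with the unit triangle identities, and then in general by the same bookkeeping. Everything else is formal.
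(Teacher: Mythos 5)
Your proof is correct, and the core computation is exactly the paper's: your $\alpha=\id_X\otimes e^{\otimes(n-1)}$ and $\beta=\id_X\otimes f^{\otimes(n-1)}$ are the paper's $e'$ and $f'$, and the comparison of $\beta\alpha$ with $\beta\sigma\alpha$ (identifying the latter with $e\circ f$ up to unit coherences) is the same argument verbatim. Where you diverge is in the reduction to $I=\1$. The paper first replaces $n$ by $2n-1$ (since a $(2n-1)$-cycle is a product of two $n$-cycles, being $n$-symmetric implies being $(2n-1)$-symmetric), ensuring $n$ is odd, and then invokes Dugger's Lemma 4.17 to conclude that the invertible object $I^{-1}$ is $n$-symmetric for odd $n$, so that $X\otimes I^{-1}$ is $n$-symmetric. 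You instead deduce $\sigma_I\simeq\id$ directly from the hypothesis — your naturality computation is just the standard observation that a retract of an $n$-symmetric object is $n$-symmetric — and then get $\sigma_{I^{-1}}\simeq\id$ from $\sigma_{I\otimes I^{-1}}\simeq\sigma_{\1}\simeq\id$, the shuffle equivalence $(I\otimes I^{-1})^{\otimes n}\simeq I^{\otimes n}\otimes(I^{-1})^{\otimes n}$, and invertibility of $I^{\otimes n}$. This is self-contained (no external citation, no odd-$n$ trick) at the cost of spelling out a couple of coherence facts (equivariance of the shuffle, triviality of $\sigma_\1$) that the paper also uses implicitly when it passes from $n$-symmetry of $X$ and $I^{-1}$ to $n$-symmetry of $X\otimes I^{-1}$; so the trade is roughly even, and your route is arguably cleaner. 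The paper additionally sketches an alternative argument in the semiadditive, idempotent-complete case (writing $X\simeq\1\oplus X'$ and showing $X'=0$); you omit this, but it is purely illustrative and not part of the main proof.
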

\begin{proof}
Since a $(2n-1)$-cycle is a product of two $n$-cycles, we may assume that $X$ is $n$-symmetric where $n$ is odd\NB{More generally, subgroup of $S_n$ generated by all $k$-cycles for some $k \ge 2$ is $A_n$ or $S_n$ depending on parity of $k$. Hence given $m \ge n$, $n$-symmetric implies $m$-symmetric if $m$ is odd or $n$ is even. Also for invertible objects, symmetric is the same as $3$-symmetric.}; then also $I^{-1}$ is $n$-symmetric \cite[Lemma 4.17]{dugger2014coherence}.
It follows that $X \otimes I^{-1}$ is $n$-symmetric, whence (replacing $X$ by $X \otimes I^{-1}$) we may assume that $I = \1$.

First we offer a slightly simpler proof in the case that $\scr C$ is semiadditive and idempotent complete, and the tensor product distributes over sums.
In this case we can write $X \wequi \1 \oplus X'$, and so $X^{\otimes n} \wequi \1 \oplus X'^{\oplus n} \oplus \dots$, where the symmetric group action restricts to the summand $X'^{\oplus n}$ and yields the canonical action for the $\oplus$ symmetric monoidal structure.
In particular $X'$ is $\oplus$-symmetric, which implies that $1=0$ as endomorphisms of $X'$, i.e. $X'=0$ as desired.

In general, consider the maps \[ e': X \wequi X \otimes \1^{\otimes n-1} \xrightarrow{\id \otimes e^{\otimes n-1}} X^{\otimes n} \quad\text{and}\quad f': X^{\otimes n} \xrightarrow{\id \otimes f^{\otimes n-1}} X \otimes \1^{\otimes n-1} \wequi X. \]
Then $f'e' \wequi \id \otimes \id^{\otimes n-1} \wequi \id$.
On the other hand if $\sigma$ is the cyclic permutation of $X^{\otimes n}$, then $f'\sigma e'$ is the tensor product of $f: X \to \1$, $n-2$ copies of $\id: \1 \to \1$ and $e: \1 \to X$, which (up to unit equivalences) is the same as $ef$.
Consequently if $\sigma \wequi \id$ then $\id \wequi f'e' \wequi f'\sigma e' \wequi ef$, so that $e,f$ are indeed inverse equivalences.
\end{proof}

\begin{exm}
Since invertible objects are symmetric \cite[Lemma 4.17]{dugger2014coherence}, Proposition \ref{prop:inv-retract} strengthens \cite[Lemma 30]{bachmann-real-etale}.
\end{exm}
\begin{exm} \label{exm:Gm-stably-symm}
Since $S^{2,1} \in \Spc(S)_*$ is $3$-symmetric (see e.g. \cite[Lemma 6.3]{hoyois-equivariant}), $\Gm \wequi S^{-1} \wedge S^{2,1} \in \SH^{S^1}(S)$ is also $3$-symmetric.
\end{exm}

\section{Étale topology} \label{sec:etale}
The following result was proved in \cite[Theorem 6.5]{bachmann-SHet} under additional finiteness assumptions, and with an additional localization on the middle category.
\begin{thm} \label{thm:infinite}
Let $S$ be a scheme with $\ell \in \scr O_S^\times$.
Then the canonical functors \[ \Sp(S_\et^\comp)_\ell^\comp \to \SH_\et^{\comp S^1}(S)_\ell^\comp \to \SH_\et^\comp(S)_\ell^\comp \] are equivalences.
\end{thm}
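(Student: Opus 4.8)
The plan is to deduce the theorem from \cite[Theorem~6.5]{bachmann-SHet}. That result gives, for $S$ of finite type over $\Z[1/\ell]$, that the first and second functors become equivalences once the middle term is replaced by a further localization of itself; in particular the outer composite $\Sp(S_\et^\comp)_\ell^\comp \to \SH_\et^\comp(S)_\ell^\comp$ is an equivalence for such $S$. I would supply two further inputs: (a)~that $\Gm$ is already $\otimes$-invertible in $\SH_\et^{\comp S^1}(S)_\ell^\comp$ for an arbitrary base $S$, so that the second functor is an equivalence (and the middle-term localization of \cite{bachmann-SHet} is superfluous); and (b)~that the finiteness hypothesis can be removed by a continuity argument. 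Given (a) and (b), the first functor is the composite of the globalised outer equivalence with the inverse of the second, and we are done; step (a), via Proposition~\ref{prop:inv-retract}, is the heart of the matter.

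For (a), first note that once $\Gm$ is $\otimes$-invertible in $D := \SH_\et^{\comp S^1}(S)_\ell^\comp$ the second functor $D \to \SH_\et^\comp(S)_\ell^\comp$ is automatically an equivalence: its right adjoint $\Omega^\infty_\Gm$ becomes fully faithful, since the colimits computing $\Omega^\infty_\Gm\Sigma^\infty_\Gm$ stabilise (tensoring with an invertible object being an equivalence), and all the generators $\Gm^{\otimes n}\otimes\Sigma^\infty_+ X$ ($X\in\Sm_S$, $n\in\Z$) of $\SH_\et^\comp(S)_\ell^\comp$ lie in its essential image as soon as $\Gm^{\otimes -1}$ does. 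So it suffices to prove $\Gm$ invertible in $D$. The canonical functor $\SH^{S^1}(S)\to D$ is symmetric monoidal, hence carries the $3$-symmetric object $\Gm$ (Example~\ref{exm:Gm-stably-symm}) to a $3$-symmetric object; by Proposition~\ref{prop:inv-retract} it is therefore enough to exhibit a retraction $I\xrightarrow{e}\Gm\xrightarrow{f}I$ in $D$ with $I$ $\otimes$-invertible. Here $I$ should be an $\ell$-adic form of the Tate-twisted sphere --- over a base with enough roots of unity simply $\Sigma$ of the $\ell$-complete sphere, in general a twist of it --- realised as the image of an invertible object of $\Sp(S_\et^\comp)_\ell^\comp$ built from the tower $(\mu_{\ell^n})_n$ of étale sheaves; the maps $e,f$ are read off from the Kummer sequence $\mu_{\ell^\bullet}\hookrightarrow\Gm\xrightarrow{(-)^{\ell^\bullet}}\Gm$, by $\ell$-completing the induced maps of reduced suspension spectra and using that $(-)^{\ell^n}$ is an étale covering (so that the Artin-motivic contributions collapse $\ell$-adically onto $I$). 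This is essentially the Bott-element construction underlying \cite[Theorem~6.5]{bachmann-SHet}; the new point is that Proposition~\ref{prop:inv-retract} upgrades the resulting \emph{one-sided} inverse to a genuine equivalence, with no finiteness input and without inverting the Bott element.

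For (b): the functors $\Sp((-)_\et^\comp)_\ell^\comp$ and $\SH_\et^\comp(-)_\ell^\comp$ --- equivalently, by (a), $\SH_\et^{\comp S^1}(-)_\ell^\comp$ --- are Zariski (indeed Nisnevich) sheaves of $\infty$-categories on qcqs schemes and are continuous, i.e.\ carry cofiltered limits of qcqs schemes along affine transition maps to colimits of $\infty$-categories. For $\Sp((-)_\et^\comp)_\ell^\comp$ this is the continuity of the $\ell$-complete hypercomplete étale $\infty$-topos, which genuinely requires $\ell$-completeness (hypercompletion by itself not being compatible with cofiltered limits); for $\SH_\et^\comp(-)_\ell^\comp$ it follows from continuity of $\SH(-)$ and the finitariness of étale $\ell$-adic localization. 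Writing an arbitrary $S$ with $\ell\in\scr O_S^\times$ as a Zariski colimit of its affine opens, and each affine open as a cofiltered limit of affine schemes of finite type over $\Z[1/\ell]$, one extends the outer equivalence of \cite[Theorem~6.5]{bachmann-SHet} to all $S$; together with (a) this yields all three arrows.

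The step I expect to be the main obstacle is (a), and within it the explicit construction of the invertible object $I$ and of the retraction $I\xrightarrow{e}\Gm\xrightarrow{f}I$ over an arbitrary base, where the Tate twist is genuinely nontrivial: one must produce $e$ and $f$ and check that $f\circ e$ is \emph{homotopic to the identity} --- not merely an equivalence --- after which Proposition~\ref{prop:inv-retract} does the rest. A secondary point of care is the continuity statement for $\Sp((-)_\et^\comp)_\ell^\comp$, where the interaction between hypercompletion and $\ell$-completion has to be handled.
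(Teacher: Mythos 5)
Your plan shares the paper's key new ingredient (Proposition \ref{prop:inv-retract} plus Example \ref{exm:Gm-stably-symm}, promoting a one-sided inverse of $\sigma\colon\Gm\to\hat\1_\ell(1)[1]$ to an equivalence) and the broad strategy (invertibility of $\Gm$, then a continuity/descent reduction), but there are two places where what you propose to do is substantially harder than or different from what the paper does, and the second one is a genuine gap.

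For (a): you propose to build the retraction $I\xrightarrow{e}\Gm\xrightarrow{f}I$, with $f\circ e\simeq\id$, directly over an arbitrary base $S$ with $\ell\in\scr O_S^\times$, and you flag this (rightly) as the hard point, because of the nontrivial Tate twist. The paper sidesteps this entirely. Since $\sigma$ is stable under base change, it suffices to check it is an equivalence over $\Z[1/\ell]$; and by \cite[Corollary 5.12]{bachmann-SHet} that can be tested over separably closed fields, where the section of $\sigma$ is \emph{already constructed} in \cite[proof of Theorem 6.5]{bachmann-SHet}. You should not try to produce $e,f$ globally; reduce to geometric points first and then invoke the known section. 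With that reorganisation the ``main obstacle'' you identify disappears.

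For (b): you assert that $\Sp((-)_\et^\comp)_\ell^\comp$ is continuous (sends cofiltered limits of qcqs schemes with affine transition maps to colimits), justified by ``$\ell$-completeness fixing hypercompletion.'' This is not established and the paper deliberately avoids it. What the paper proves continuous is $\Sp((-)_\et)_\ell^\comp$ \emph{without} hypercompletion (and likewise $\SH^{S^1}_\et(-)_\ell^\comp$): the hypercomplete version is a further localization at $\infty$-connective maps, and the class of $\infty$-connective maps for $S=\lim_\alpha S_\alpha$ need not be generated by the pullbacks of the $\infty$-connective maps over the $S_\alpha$ (this is why ``$\ell$-cohomological dimension can jump in the limit'' is a real issue). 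The paper's workaround is: compare $\colim_\alpha \Sp(S_{\alpha,\et}^\comp)_\ell^\comp$ with $\Sp(S_\et^\comp)_\ell^\comp$ via a diagram in which the map in question ($d$) is shown to be a \emph{localization} (not a priori an equivalence), using that the horizontal maps $b,b'$ are localizations and that $c$ is an equivalence by the finite-type case; then one proves $d$ is conservative by pulling back to geometric points $\bar s\to S$, where the statement is known. Your proposal is missing this localization-plus-conservativity step; as written it relies on an unproved continuity claim for the hypercomplete $\ell$-complete étale topos.

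One small further point: you need the Zariski-descent reduction to the affine case before the cofiltered-limit step (the paper uses right Kan extension from affine $\Z[1/\ell]$-schemes); you do mention this but it should be in the right order, and it requires knowing that all three functors are Zariski sheaves, which is argued separately.
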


\newcommand{\DA}{D\!A}
We can also remove finiteness assumptions from related rigidity results.
For example, denote by $D(S_\et^\comp, \Z/\ell)$ the unbounded derived category sheaves of $\Z/\ell$-modules, and by $\DA_\et^\comp(S, \Z/\ell)$ Ayoub's category of étale motives without transfers \cite[\S3]{ayoub2014realisation}.
The equivalence of the outer two categories in the following result was proved in \cite[Theoreme 4.1]{ayoub2014realisation} under additional finiteness assumptions.
\begin{cor}
Let $S$ be a scheme with $\ell \in \scr O_S^\times$.
Then the canonical functors \[ D(S_\et^\comp, \Z/\ell) \to \DA_\et^{\comp S^1}(S, \Z/\ell) \to \DA_\et^\comp(S, \Z/\ell) \] are equivalences.
\end{cor}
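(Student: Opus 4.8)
The plan is to deduce this from Theorem~\ref{thm:infinite} by passing to categories of $\Z/\ell$-linear objects. Both functors in Theorem~\ref{thm:infinite} are symmetric monoidal -- the second is $\Gm$-stabilization, and the first factors as the pullback $\Sp(S_\et^\comp)\to\Sp(\Sm_{S,\et}^\comp)$ along the morphism of $\infty$-topoi $\Sm_{S,\et}^\comp\to S_\et^\comp$, followed by the motivic localization and $\ell$-completion -- so the equivalences they provide are symmetric monoidal. The three ``mod-$\ell$ Eilenberg--MacLane'' objects $H\Z/\ell$ occurring in the three categories are the images of one and the same $H\Z/\ell\in\Sp$ under these mutually compatible symmetric monoidal functors, so the equivalences carry $H\Z/\ell$ to $H\Z/\ell$. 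Applying $\Mod_{H\Z/\ell}(-)$, a construction that is functorial in symmetric monoidal functors and preserves equivalences, therefore yields equivalences
\[ \Mod_{H\Z/\ell}\!\big(\Sp(S_\et^\comp)_\ell^\comp\big)\simeq\Mod_{H\Z/\ell}\!\big(\SH_\et^{\comp S^1}(S)_\ell^\comp\big)\simeq\Mod_{H\Z/\ell}\!\big(\SH_\et^\comp(S)_\ell^\comp\big). \]

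Next I would discard the $\ell$-completions. Multiplication by $\ell$ annihilates $H\Z/\ell$ and hence every $H\Z/\ell$-module, so $H\Z/\ell$-modules are automatically $\ell$-complete and $\Mod_{H\Z/\ell}(\scr C_\ell^\comp)\simeq\Mod_{H\Z/\ell}(\scr C)$ for each of the three categories $\scr C$ above. (Equivalently, setting $D(\Z/\ell):=\Mod_{H\Z/\ell}(\Sp)$ one has $\scr C\otimes_{\Sp}D(\Z/\ell)\simeq\Mod_{H\Z/\ell}(\scr C)$, and $\scr C_\ell^\comp\otimes_{\Sp}D(\Z/\ell)\simeq\scr C\otimes_{\Sp}D(\Z/\ell)$ because $\scr C_\ell^\comp$ is the Verdier quotient of $\scr C$ by $\Mod_{\Ss[1/\ell]}(\scr C)$ while $\Mod_{\Ss[1/\ell]}(\scr C)\otimes_{\Sp}D(\Z/\ell)=0$.) It then remains to identify $\Mod_{H\Z/\ell}(\Sp(S_\et^\comp))$ with $D(S_\et^\comp,\Z/\ell)$, and $\Mod_{H\Z/\ell}(\SH_\et^\comp(S))$ together with its $S^1$-analogue with $\DA_\et^\comp(S,\Z/\ell)$ and $\DA_\et^{\comp S^1}(S,\Z/\ell)$, and to check that the equivalences obtained are the canonical comparison functors -- and then the proof is complete.

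The first identification is the standard fact that the (hypercomplete) derived category of sheaves of $\Z/\ell$-modules on $S_\et$ is the category of modules over the constant sheaf $\underline{\Z/\ell}$, regarded as a commutative ring object in hypercomplete sheaves of spectra (see e.g.\ \cite[Ch.~7]{HA}). The second says that Ayoub's category $\DA_\et^\comp(S,\Z/\ell)$ is the $\Z/\ell$-linearization of $\SH_\et^\comp(S)$; this is where ``without transfers'' enters, since it is the naive Eilenberg--MacLane object -- the image of $H\Z/\ell\in\Sp$, not the motivic cohomology spectrum with transfers -- that computes $\DA$, and this identification can be extracted from \cite[\S3]{ayoub2014realisation}.

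The step I expect to require genuine care -- and the only one that is not purely formal -- is matching the \emph{hypercomplete} versions. Over a base $S$ of possibly infinite cohomological dimension one cannot freely interchange hypercompletion with $\Gm$-stabilization, $\Z/\ell$-linearization, or the formation of module categories, so one must verify that Ayoub's $\DA_\et^\comp(S,\Z/\ell)$ really does agree with $\Mod_{H\Z/\ell}(\SH_\et^\comp(S))$ rather than with some a priori smaller or larger variant. The cleanest route is probably to observe that under the equivalence of Theorem~\ref{thm:infinite} the motivic $H\Z/\ell$ corresponds to the $0$-truncated, hence hypercomplete, constant sheaf $\underline{\Z/\ell}$, which simultaneously pins down the hypercompletion on both sides; the same remark underlies the first identification, where for general $S$ one works with $D(S_\et^\comp,\Z/\ell)$ in its hypercomplete form by definition. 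Granting these comparisons -- which involve neither $\A^1$-homotopy nor Theorem~\ref{thm:infinite} -- the corollary follows formally.
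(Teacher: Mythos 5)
Your proof is correct and takes essentially the same route as the paper: apply $\Mod_{H\Z/\ell}(-)$ to the symmetric monoidal equivalences of Theorem~\ref{thm:infinite} and identify the resulting module categories with $D(S_\et^\comp,\Z/\ell)$, $\DA_\et^{\comp S^1}(S,\Z/\ell)$, and $\DA_\et^\comp(S,\Z/\ell)$. The paper keeps the $\ell$-completions and invokes \cite[Theorem~2.1.2.2]{SAG} to handle the identifications (including the hypercompleteness point you flag), whereas you discard the $\ell$-completions up front by noting $H\Z/\ell$-modules are automatically $\ell$-complete; these are equivalent formulations.
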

\begin{proof}
Writing $H\Z/\ell \in \CAlg(\Sp(S_\et^\comp)_\ell^\comp)$ for the Eilenberg--MacLane spectrum, the result follows from the equivalences $D(S_\et^\comp, \Z/\ell) \wequi \Mod_{H\Z/\ell}(\Sp(S_\et^\comp)_\ell^\comp)$, $\DA_\et^{\comp S^1}(S, \Z/\ell) \wequi \Mod_{H\Z/\ell}(\SH_\et^{\comp S^1}(S)_\ell^\comp)$ and $\DA_\et^\comp(S, \Z/\ell) \wequi \Mod_{H\Z/\ell}(\SH_\et^\comp(S)_\ell^\comp)$.
The third is a formal consequence of the second, and the first two follow from \cite[Theorem 2.1.2.2]{SAG}.
\end{proof}

We recall some ingredients used in the proof of Theorem \ref{thm:infinite}.

(0) If $F: \scr C \to \scr D$ is a cocontinuous, symmetric monoidal functor of presentably symmetric monoidal $\infty$-categories and $\scr C$ is $\ell$-complete (i.e. $\scr C \wequi \scr C_\ell^\comp$), then so is $\scr D$.
Indeed for objects $X, Y \in \scr D$, the functor $F(\ph) \otimes X: \scr C \to \scr D$ admits a right adjoint $r_X$, and hence $\Map_{\scr D}(X, Y) \wequi \Map_{\scr C}(\1, r_X Y)$ is $\ell$-complete as needed.
In particular, $\ell$-completion commutes with localization of presentably symmetric monoidal $\infty$-categories.

(1) The functors \[ \Sp(S_\et^\comp)_\ell^\comp, \SH_\et^{\comp S^1}(S)_\ell^\comp, \SH_\et^\comp(S)_\ell^\comp: \Sch_{\Z[1/\ell]}^\op \to \Cat_\infty \] are Zariski sheaves.
This implies that they are right Kan extended from their restriction to affine $\Z[1/\ell]$-schemes (see e.g. \cite[Lemma C.3]{hoyois2015quadratic}), which is what we shall use.
The descent properties are established by arguments entirely analogous to e.g. \cite[Proposition 4.8]{hoyois-equivariant} \cite[\S2.3]{ayoub2020six}; in fact all three functors satisfy étale hyperdescent.

(2) The functor \[ \Et_{(\ph)}^\mathrm{fp}: \Sch^\op \to \Cat_\infty \] (sending $X$ to the category of finitely presented étale $X$-schemes) is \emph{continuous}: it converts cofiltered limits of quasi-compact quasi-separated schemes with affine transition maps into colimits \cite[Lemme VII.5.6]{sga4}.
This implies that also \[ \PSh(\Et_{(\ph)}^\mathrm{fp}): \Sch^\op \to Pr^L \] is continuous.
From this one deduces the same result for spectral presheaves.
The category of étale \emph{sheaves} is obtained by inverting the nerves of (finitely presented) étale covers, which must be pulled back from a finite stage by continuity of $\Et_{(\ph)}$ and quasi-compactness.
It follows that \[ \Sp((\ph)_\et): \Sch^\op \to Pr^L \] is continuous.
Beware the absence of hypercompletion!
By similar arguments, the functor \[ \SH_\et^{S^1}(\ph): \Sch^\op \to Pr^L \] is continuous (again no hypercompletion).
From this one easily deduces (e.g. using (0)) that also $\Sp((\ph)_\et)_\ell^\comp, \SH_\et^{S^1}(\ph)_\ell^\comp$ are continuous.

(3) Finally recall the object $\hat\1_\ell(1)[1] \in \Sp(S_\et^\comp)_\ell^\comp$ and the map $\sigma: \Gm \to \hat\1_\ell(1)[1] \in \SH_\et^{\comp S^1}(S)_\ell^\comp$ from \cite[\S3]{bachmann-SHet}.

\begin{proof}[Proof of Theorem \ref{thm:infinite}.]
We first show that $\sigma: \Gm \to \hat\1_\ell(1)[1] \in \SH_\et^{\comp S^1}(S)_\ell^\comp$ is an equivalence.
By stability of $\sigma$ under base change, we reduce to $S=Spec(\Z[1/\ell])$, and by \cite[Corollary 5.12]{bachmann-SHet} we reduce to $S=Spec(k)$, where $k$ is a separably closed field.
In this situation $\sigma: \Gm \to \hat\1_\ell(1)[1]$ admits a section \cite[proof of Theorem 6.5]{bachmann-SHet}, and thus $\sigma$ is an equivalence by Proposition \ref{prop:inv-retract} and Example \ref{exm:Gm-stably-symm}.

We have thus proved that $\SH_\et^{\comp S^1}(S)_\ell^\comp \wequi \SH_\et^\comp(S)_\ell^\comp$.
In particular the theorem holds whenever \cite[Theorem 6.6]{bachmann-SHet} applies, so for example $(*)$ if $S$ is finite type over $\Z[1/\ell]$ or the spectrum of a separably closed field of characteristc $\ne \ell$.

To prove that $\Sp(S_\et^\comp)_\ell^\comp \wequi \SH_\et^{\comp S^1}(S)_\ell^\comp$, we may assume by right Kan extension that $S$ is affine.
In this case we can write $S = \lim_\alpha S_\alpha$, where each $S_\alpha$ is affine (so in particular quasi-compact quasi-separated) and of finite type over $\Z[1/\ell]$.
Denote by $V_\alpha$ (respectively $V$) the class of $\infty$-connective maps in $\Sp(S_{\alpha,\et})$ (respectively $\Sp(S_{\et})$) and by $W_\alpha$ (respectively $W$) the $\infty$-connective maps in $\Sp(\Sm_{S_\alpha,\et})$ (respectively $\Sp(\Sm_{S,\et})$).
Then $\Sp(S_{\alpha,\et}^\comp)_\ell^\comp \wequi \Sp(S_{\alpha,\et})_\ell^\comp[V_\alpha^{-1}]$ (in other words, $\Sp(S_{\alpha,\et}^\comp)_\ell^\comp$ is the initial object of $Pr^L_{\Sp(S_{\alpha,\et}^\comp)_\ell^\comp/}$ inverting $V_\alpha$), and similarly $\SH_\et^{\comp S^1}(S_\alpha)_\ell^\comp \wequi \SH_\et^{S^1}(S_\alpha)_\ell^\comp[W_\alpha^{-1}]$.
By continuity, $\Sp(S_\et)_\ell^\comp \wequi \colim_\alpha \Sp(S_{\alpha,\et})_\ell^\comp$.
Consider the commutative diagram in $Pr^L$
\begin{equation*}
\begin{CD}
\Sp(S_\et)_\ell^\comp @>a>> \colim_\alpha \Sp(S_{\alpha,\et}^\comp)_\ell^\comp[V_\alpha^{-1}] @>b>> \Sp(S_\et^\comp)_\ell^\comp \\
@VVV                              @VcVV                                                          @VdVV                 \\
\SH_\et^{S^1}(S)_\ell^\comp @>a'>> \colim_\alpha \SH_\et^{S^1}(S_\alpha)_\ell^\comp[W_\alpha^{-1}] @>b'>> \SH_\et^{\comp S^1}(S)_\ell^\comp.
\end{CD}
\end{equation*}
The morphism $a$ is a localization (namely at the union of the pullbacks to $S$ of the classes $V_\alpha$), and $ba$ is a localization (namely at $V$); hence so is $b$.
Similarly $b'$ is a localization.
The morphism $c$ is an equivalence, being a colimit of equivalences by $(*)$.
We deduce that $d$ is a localization.
To conclude the proof, it thus suffices to prove that $d$ is conservative.
Since equivalences of hypercomplete sheaves may be tested on stalks (essentially by definition of hypercompleteness, this reduces to the well-known special case of sheaves of abelian groups), an object of $\Sp(S_\et^\comp)_\ell^\comp$ vanishes if and only if its image under the pullback to $\Sp(\bar s_\et^\comp)_\ell^\comp$ vanishes, for every geometric point $\bar s = Spec(\bar k) \to S$ (where $\bar k$ is a separably closed field).
Since formation of $d$ is natural in $S$, we are reduced to the case $S=\bar s$, which was already established (see $(*)$).
\end{proof}

\todo{is $\SH_\et^{\comp S^1}(S) \to \SH_\et^\comp(S)$ fully faithful?}

\begin{cor}
Let $S$ be a scheme with $\ell \in \scr O_S^\times$.
Then $\Sp(S_\et^\comp)_\ell^\comp \to \Sp((S \times \A^1)^\comp_\et)_\ell^\comp$ is fully faithful.
(In other words, ``$\ell$-adic hyper-étale cohomology with spectral coefficients is $\A^1$-invariant''.)
\end{cor}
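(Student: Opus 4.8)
The plan is to transport the question into motivic homotopy theory via Theorem~\ref{thm:infinite} and then invoke homotopy invariance. Since $\ell \in \scr O_S^\times$ forces $\ell \in \scr O_{\A^1_S}^\times$, Theorem~\ref{thm:infinite} applies both to $S$ and to $\A^1_S$; moreover the comparison equivalences $\Sp((\ph)_\et^\comp)_\ell^\comp \simeq \SH_\et^{\comp S^1}(\ph)_\ell^\comp$ it furnishes are natural in the scheme, so the canonical pullback functor appearing in the corollary is identified with the base-change functor $p^*\colon \SH_\et^{\comp S^1}(S)_\ell^\comp \to \SH_\et^{\comp S^1}(\A^1_S)_\ell^\comp$ along the projection $p\colon \A^1_S \to S$. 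It therefore suffices to prove that this $p^*$ is fully faithful.

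First I would treat the situation before $\ell$-completion, showing that $p^*\colon \SH_\et^{\comp S^1}(S) \to \SH_\et^{\comp S^1}(\A^1_S)$ is fully faithful. As $p$ is smooth, $p^*$ admits a left adjoint $p_\sharp$ (this, like the rest of the smooth base-change formalism, descends from $\SH^{S^1}(\ph)$ to its étale hyperlocalization, base change preserving étale hypercovers), and $p^*$ is fully faithful precisely when the counit $p_\sharp p^* \to \id$ is an equivalence. Both $p_\sharp p^*$ and $\id$ are colimit-preserving endofunctors of $\SH_\et^{\comp S^1}(S)$, which is generated under colimits by the objects $\Sigma^\infty_+ X$, $X \in \Sm_S$, so it is enough to check the counit there: $p^*\Sigma^\infty_+ X \simeq \Sigma^\infty_+(X \times_S \A^1_S)$, so $p_\sharp p^* \Sigma^\infty_+ X \simeq \Sigma^\infty_+(X\times_S \A^1_S) \simeq \Sigma^\infty_+ X \otimes \Sigma^\infty_+\A^1_S$, with counit the map induced by the projection $X \times_S \A^1_S \to X$. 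Because $\SH_\et^{\comp S^1}(S)$ is a symmetric monoidal Bousfield localization of the $\A^1$-invariant category $\SH^{S^1}(S)$, the collapse $\Sigma^\infty_+\A^1_S \to \1$ remains an equivalence there; hence so is the counit, and $p^*$ is fully faithful. (This is just the usual homotopy invariance of $\SH_\et^{\comp S^1}(\ph)$.)

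It remains to descend to $\ell$-complete objects. The functor $p^*$ has both a left adjoint $p_\sharp$ and a right adjoint $p_*$, so it preserves all limits and colimits; being exact and limit-preserving it commutes with the limits computing $\ell$-completions, hence carries $\ell$-complete objects to $\ell$-complete objects and restricts to $\SH_\et^{\comp S^1}(S)_\ell^\comp \to \SH_\et^{\comp S^1}(\A^1_S)_\ell^\comp$. Full faithfulness of the restriction is automatic, since the $\ell$-complete objects form full subcategories on both sides and $p^*$ is already fully faithful on the ambient categories; together with the first paragraph this gives the corollary. In this argument the only substantial input is Theorem~\ref{thm:infinite}, so the step to watch is the reduction itself --- specifically, confirming that the comparison equivalences there are compatible with base change, and that homotopy invariance (together with the existence of $p_\sharp$) survives the étale hyperlocalization. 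Granting this standard material, everything else is formal manipulation of localizations and completions.
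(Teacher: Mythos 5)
Your proof is correct and, at its core, the same as the paper's: both transport the statement through the equivalences of Theorem~\ref{thm:infinite} and then appeal to the fact that the motivic categories are $\A^1$-invariant by construction (the paper routes through $\SH_\et^\comp$, you through $\SH_\et^{\comp S^1}$, which makes no essential difference). The paper dispatches this in one line (``holds for $\SH_\et^\comp(S)_\ell^\comp$ essentially by construction''); your version usefully unpacks that phrase into the smooth projection-formula computation $p_\sharp p^* \simeq (\ph) \otimes \Sigma^\infty_+ \A^1_S$, the verification on compact generators, and the observation that $p^*$ preserves $\ell$-complete objects so that full faithfulness passes to the completed subcategories.
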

\begin{proof}
This holds for $\SH_\et^\comp(S)_\ell^\comp$ essentially by construction.
\end{proof}

\section{Real étale topology}
Recall the real étale topology from \cite{real-and-etale-cohomology}.
Denote by $S_\ret$ the small real étale $\infty$-topos of $S$ (not hypercompleted), by $\SH_\ret(S)$ the localization of $\SH(S)$ at the real étale covers, and so on.
\begin{rmk}
If $\dim S < \infty$, then $S_\ret$ and $\Sm_{S,\ret}$ are hypercomplete \cite[Theorem B.13]{elmanto2019scheiderer}.
It follows that in this situation, our notation coincides with the one from \cite{bachmann-real-etale} (where everything is hypercompleted and finite dimensional by definition).
\end{rmk}
The following result strengthens \cite[Theorem 35]{bachmann-real-etale}, by removing $\rho$-inversion from $\SH^{S^1}_\ret(S)$ and finiteness assumptions from $S$.
\begin{thm} \label{thm:real}
Let $S$ be any scheme.
Then \[ \SH(S)[\rho^{-1}] \wequi \SH_\ret(S) \wequi \SH^{S^1}_\ret(S) \wequi \Sp(S_\ret). \]
In particular $\rho: S^0 \to \Gm \in \SH^{S^1}_\ret(S)$ is an equivalence.
\end{thm}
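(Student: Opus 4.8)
The plan is to reduce the whole statement to the single claim that $\rho\colon S^0 \to \Gm$ is an equivalence in $\SH^{S^1}_\ret(S)$, and to prove \emph{that} by base change down to a real closed field, where I will apply Proposition~\ref{prop:inv-retract}. Granting the claim, $\Gm \wequi \1$ is $\otimes$-invertible in $\SH^{S^1}_\ret(S)$, so the localization $\SH^{S^1}_\ret(S) \to \SH^{S^1}_\ret(S)[\Gm^{-1}] \wequi \SH_\ret(S)$ is an equivalence; combined with the outer equivalences $\SH(S)[\rho^{-1}] \wequi \SH_\ret(S)$ and $\SH^{S^1}_\ret(S)[\rho^{-1}] \wequi \Sp(S_\ret)$ this gives all four equivalences, and the final sentence of the theorem is the claim itself.

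The outer equivalences are \cite[Theorem~35]{bachmann-real-etale} when $S$ is finite-dimensional. To obtain them for an arbitrary $S$ one argues as in the proof of Theorem~\ref{thm:infinite}: the functors $\SH(\ph)[\rho^{-1}]$, $\SH_\ret(\ph)$, $\SH^{S^1}_\ret(\ph)$, $\Sp((\ph)_\ret)\colon \Sch^\op \to Pr^L$ are Zariski sheaves, hence right Kan extended from affine schemes, and they are continuous --- and, in contrast to the étale situation, no hypercompletion intervenes, so continuity is available for exactly the functors appearing in the statement. Writing an affine $S$ as a cofiltered limit $S = \lim_\alpha S_\alpha$ of affine schemes of finite type over $\Z$ one gets $\SH^{S^1}_\ret(S) \wequi \colim_\alpha \SH^{S^1}_\ret(S_\alpha)$ and likewise for the other three; since the $S_\alpha$ are finite-dimensional, the outer equivalences follow by passing to the colimit.

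It remains to prove that $\rho$ is an equivalence in $\SH^{S^1}_\ret(S)$. Since $\rho$ is pulled back from $\Spec(\Z)$ and pullback functors are symmetric monoidal and cocontinuous, it suffices to treat $S = \Spec(\Z)$. This scheme is finite-dimensional, so $\Sm_{\Spec(\Z),\ret}$ is hypercomplete \cite[Theorem~B.13]{elmanto2019scheiderer}; hence an object of $\SH^{S^1}_\ret(\Spec(\Z))$ vanishes if and only if its pullback to $\SH^{S^1}_\ret(\Spec k)$ vanishes for every real closed field $k$ (the real étale analogue of passing to geometric points; cf.\ \cite{real-and-etale-cohomology}). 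By naturality of $\rho$ we are reduced to $S = \Spec k$ with $k$ real closed. Over such a $k$, the analysis of the real closed case in \cite{bachmann-real-etale} shows that $\rho\colon \1 \to \Gm$ admits a retraction $r\colon \Gm \to \1$ in $\SH^{S^1}_\ret(k)$; since $\Gm$ is $3$-symmetric (Example~\ref{exm:Gm-stably-symm}, whence the same holds in $\SH^{S^1}_\ret(k)$), Proposition~\ref{prop:inv-retract} applies with $I = \1$, $X = \Gm$, $e = \rho$, $f = r$ and shows that $\rho$ is an equivalence, as desired.

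The step I expect to require the most care is the production of the retraction $r$ over a real closed field without having inverted $\rho$ first: the clean statements available are phrased after $\rho$-inversion, so one must either extract a genuine one-sided inverse from the telescope $\colim(\1 \xrightarrow{\rho} \Gm \xrightarrow{\rho} \Gm^{\otimes 2} \to \cdots)$ computing the $\rho$-localization of $\1$ --- using that over a real closed field this telescope is already equivalent to $\1$ --- or argue directly that $\Gm$ is equivalent to the constant sheaf $S^0$ in $\SH^{S^1}_\ret(k)$, using the real étale cover $\Gm \sqcup \Gm \xrightarrow{(t\, \mapsto\, t^2,\; t\, \mapsto\, -t^2)} \Gm$ together with induction on dimension. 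A secondary point is that the detection-on-stalks argument is only available over a finite-dimensional base, which is why the reduction to $S = \Spec(\Z)$ is made before invoking hypercompleteness.
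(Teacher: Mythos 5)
Your high-level plan — reduce everything to the single assertion that $\rho$ is an equivalence in $\SH^{S^1}_\ret(S)$, obtain the outer equivalences from \cite[Theorem~35]{bachmann-real-etale} via Zariski descent and continuity, and then invoke Proposition~\ref{prop:inv-retract} with Example~\ref{exm:Gm-stably-symm} — matches the paper. However, the route you take to produce the retraction contains two genuine gaps, both of which the paper avoids by a direct citation.

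First, the reduction to real closed fields. You claim that after reducing to $S = \Spec(\Z)$, an equivalence in $\SH^{S^1}_\ret(\Spec\Z)$ can be detected by pulling back to $\SH^{S^1}_\ret(\Spec k)$ for $k$ real closed, by analogy with geometric points. This is not an innocent step: detection on stalks is available for the \emph{small} real \'etale topos (as in the paper's use of stalks on the small \'etale topos in the proof of Theorem~\ref{thm:infinite}), but $\SH^{S^1}_\ret(\Spec\Z)$ is built from the \emph{big} smooth-real-\'etale site after $\A^1$-localization, and a conservativity statement of the form you want is not automatic there. The paper's proof never needs such a reduction. Second, your telescope argument for the retraction over a real closed $k$ is circular: to conclude that $\1 \to \1[\rho^{-1}] \wequi \colim(\1 \xrightarrow{\rho} \Gm \xrightarrow{\rho} \cdots)$ is an equivalence you would need to know that $\rho$ already acts invertibly on $\1 \in \SH^{S^1}_\ret(k)$, which is precisely the content of the theorem; \cite[Theorem~35]{bachmann-real-etale} only identifies the category \emph{after} $\rho$-inversion, so it does not tell you the localization map is an equivalence. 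Your alternative suggestion (using the cover $\Gm\sqcup\Gm\to\Gm$) is the right geometric idea but is left as a sketch.

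The paper sidesteps both problems: after reducing to $S$ finite type over $\Z$ by Zariski descent and continuity, it directly invokes the \emph{proof} of \cite[Proposition~29]{bachmann-real-etale}, which exhibits a retraction $S^0 \xrightarrow{\rho} \Gm \to S^0$ already in $\Spc_\ret(S)_*$ (pointed motivic spaces, no stabilization, no $\rho$-inversion, no reduction to a field). Stabilizing gives the retraction in $\SH^{S^1}_\ret(S)$, and Proposition~\ref{prop:inv-retract} plus Example~\ref{exm:Gm-stably-symm} finish the argument. If you replace your passage through real closed fields and the telescope with this citation, your proof becomes the paper's.
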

\begin{proof}
By Zariski descent and continuity (see \S\ref{sec:etale} and \cite[proof of Proposition 3.4.1]{real-and-etale-cohomology}), we may assume that $S$ is finite type over $\Z$.
In this case by \cite[Theorem 35]{bachmann-real-etale}, only the last statement requires proof.
The proof of \cite[Proposition 29]{bachmann-real-etale} constructs a retraction $S^0 \xrightarrow{\rho} \Gm \to S^0 \in \Spc_{ret}(S)_*$.
The result thus follows from Proposition \ref{prop:inv-retract} and Example \ref{exm:Gm-stably-symm}.
\end{proof}

\appendix
\section{Vanishing of $\SH_\et^\comp(\F_p)_p^\comp$}

\begin{thm} \label{thm:marc}
We have $\1/p \wequi 0 \in \SH^{\comp S^1}_\et(\F_p)$.
In particular if $X \in \Sch_{\F_p}$ then \[ \SH_\et^{\comp S^1}(X)_p^\comp = * = \SH_\et^\comp(X)_p^\comp. \]
\end{thm}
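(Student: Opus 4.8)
The ``in particular'' clause is formal: base change yields symmetric monoidal functors $\SH_\et^{\comp S^1}(\F_p) \to \SH_\et^{\comp S^1}(X)$ and $\SH_\et^{\comp S^1}(X) \to \SH_\et^\comp(X)$ carrying $\1/p$ to $\1/p$, and $\1/p \simeq 0$ means precisely that $p\colon \1 \to \1$ is invertible, so the ambient category is $\Z[1/p]$-linear and its $p$-complete part is $*$. So I would focus on $\1/p \simeq 0$ in $\SH_\et^{\comp S^1}(\F_p)$; as this is a further symmetric monoidal localization of $\SH_\et^{S^1}(\F_p)$, it suffices to prove the statement there. The plan is to treat first the weight-zero case $H\Z/p \simeq 0$ by an explicit Artin--Schreier computation, and then to bootstrap to the sphere via the slice filtration.

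For the weight-zero case, the Artin--Schreier sequence
\[ 0 \longrightarrow \Z/p \longrightarrow \mathbb{G}_a \xrightarrow{F-1} \mathbb{G}_a \longrightarrow 0 \]
is exact for the étale topology on $\Sm_{\F_p}$: on a strictly henselian stalk $F-1$ is surjective since $x^p - x - b$ is étale, with kernel the prime field. As $\mathbb{G}_a = \mathcal{O}$ is quasi-coherent it has vanishing higher étale cohomology on affines, so changing topology from Nisnevich to étale and back sends $H\Z/p$ to $\fib(F-1\colon H\mathbb{G}_a \to H\mathbb{G}_a)$ as a sheaf of spectra on $\Sm_{\F_p}$. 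The crucial observation is that $\mathbb{G}_a$ becomes trivial after $\A^1$-localization \emph{as an abelian group object}: the scaling map $\A^1 \times \mathbb{G}_a \to \mathbb{G}_a$ is linear in $\mathbb{G}_a$, and its restrictions along $0$ and $1$ in $\A^1(\F_p)$ are the maps $0$ and $\id_{\mathbb{G}_a}$, which agree after $\A^1$-localization (both points section the $\A^1$-equivalence $\A^1 \to \Spec \F_p$); hence $\id \simeq 0$ on $L_{\A^1}\mathbb{G}_a$, so $L_{\A^1}H\mathbb{G}_a \simeq 0$. By the usual two-step description of the étale-local motivic category this gives $H\Z/p \simeq L_{\A^1}\fib(F-1\colon H\mathbb{G}_a \to H\mathbb{G}_a) \simeq \fib(0 \to 0) \simeq 0$ in $\SH_\et^{S^1}(\F_p)$; equivalently, multiplication by $p$ is invertible on the étale-localized $H\Z$.

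To bootstrap, recall that over the perfect field $\F_p$ every slice $s_q(\1)$ of the motivic sphere is a module over $s_0(\1) = H\Z$ (Voevodsky, Levine), hence so is $s_q(\1/p)$, which is moreover annihilated by a power of $p$; by the previous paragraph it therefore vanishes in $\SH_\et^{S^1}(\F_p)$. Then $\1/p \simeq \lim_q \cofib\big(f_q(\1/p) \to \1/p\big)$ exhibits $\1/p$ as a limit of objects built from these vanishing slices, so $\1/p \simeq 0$ --- \emph{provided the slice tower converges}, which is the step I expect to be the main obstacle. I would try to deduce the convergence from the finiteness of the cohomological dimension of $\F_p$, which bounds the étale cohomology of the $\F_p$-schemes appearing and should let one push Levine's convergence theorem through the étale localization. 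A more self-contained alternative would avoid slices: using Geisser--Levine together with the Artin--Schreier--Witt sequences for the logarithmic de Rham--Witt sheaves $W_m\Omega^n_{\log}$, one shows directly that the étale sheafification of every mod-$p$ motivic complex is $\A^1$-trivial over $\F_p$, and $\1/p \simeq 0$ then follows from the convergent $H\Z$-based motivic Adams resolution.
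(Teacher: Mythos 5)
Your treatment of the weight-zero case is correct and agrees with the paper's: the Artin--Schreier fiber sequence plus the $\A^1$-contractibility of $\mathbb{G}_a$ as an abelian group object gives $\DA_\et^{\comp S^1}(\F_p,\Z/p) = *$, i.e.\ $H\Z/p \simeq 0$ after étale and $\A^1$-localization. The bootstrap from $H\Z$ to the sphere is where you diverge from the paper, and it is where your proposal has a genuine gap. You route through the slice filtration and flag slice convergence (after étale localization) as the main obstacle; that is indeed an obstacle you do not close, and it is compounded by a category mismatch: the slice filtration is a $\Gm$-stable notion living in $\SH(\F_p)$, while the statement is in the $S^1$-stable category $\SH^{\comp S^1}_\et(\F_p)$, so the identification of $\1/p$ with a limit of truncations built from $s_q(\1)$-modules does not literally make sense there. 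Even setting that aside, étale-local $\A^1$-localization $L_{\et,\mot}^\comp$ need not commute with the inverse limit defining the tower, which is precisely the convergence issue.

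The paper sidesteps all of this with a connectivity bound rather than a tower. The key technical input is Lemma~\ref{lem:lmot-conn}: using that affine $\F_p$-schemes have $p$-étale cohomological dimension $\le 1$ (itself a consequence of Artin--Schreier via SGA~4, X.5.1), and writing $L_{\et,\mot}^\comp$ as the simplicial $\A^1$-localization $L_{\A^1}$, one shows directly that $L_{\et,\mot}^\comp(\1/p)$ lies in $\Sp(\Sm_{\F_p,\et}^\comp)_{\ge -1}$. A bounded-below object with vanishing $H\Z$-homology is zero (Postnikov/Hurewicz), so it remains only to check $L_{\et,\mot}^\comp(\1/p)\wedge H\Z \simeq L_{\et,\mot}^\comp(H\Z/p) \simeq 0$, which is your Artin--Schreier computation. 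This is shorter, avoids slice convergence entirely, and works natively in $\SH^{\comp S^1}_\et$. If you want to repair your argument, the thing to look for is not a convergence theorem for the slice tower but rather a connectivity bound of the type in Lemma~\ref{lem:lmot-conn}: finite $p$-cohomological dimension is the input, and it enters through the behavior of $E/p$ on affines rather than through slices.
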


Before the proof, we need some preparation.
The category $\Sp(\Sm_{S,\et}^\comp)$ of étale hypersheaves of spectra on $\Sm_S$ admits a canonical non-degenerate $t$-structure (see e.g. \cite[\S2.2]{bachmann-SHet}).
Denote by $L_{\et,\mot}^\comp$ the localization endofunctor of $\Sp(\Sm_{S,\et}^\comp)$ corresponding to the $\A^1$-equivalences, so that the category of local objects is $\SH_\et^{\comp S^1}(S)$.
\begin{lem} \label{lem:lmot-conn}
If $E \in \Sp(\Sm_{\F_p,\et}^\comp)_{\ge 0}$, then $L_{\et,\mot}^\comp E/p \in \Sp(\Sm_{\F_p,\et}^\comp)_{\ge -1}$.\NB{In fact $E/p \in \Sp(\Sm_{\F_p,\et}^\comp)_{\ge 0}$.}
\end{lem}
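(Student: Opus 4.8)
The plan is to deduce the statement from two inputs: the vanishing of $L_{\et,\mot}^\comp$ on Eilenberg--MacLane sheaves of sheaves of $\F_p$-vector spaces, and a connectivity estimate for $L_{\et,\mot}^\comp$ on highly connective $p$-power-torsion objects. For the first input, I would record that $L_{\et,\mot}^\comp(HM) \simeq 0$ for every sheaf $M$ of $\F_p$-vector spaces on $\Sm_{\F_p,\et}$. When $M = \underline{\F_p}$ this follows from the Artin--Schreier fiber sequence of étale sheaves $\underline{\F_p} \to \mathbb{G}_a \xrightarrow{F-1} \mathbb{G}_a$: since $L_{\et,\mot}^\comp$ is exact it is enough to see $L_{\et,\mot}^\comp(H\mathbb{G}_a) \simeq 0$, and this holds because on an affine $\Spec R$ the functor $\mathrm{Sing}$ carries $H\mathbb{G}_a$ to the geometric realization of the simplicial $R$-module $[n] \mapsto \mathcal{O}(\Delta^n_R) \cong R \otimes_{\Z} \mathcal{O}(\Delta^n_{\Z})$, whose normalized complex is a bounded-below acyclic complex of free $\Z$-modules, hence chain-contractible, and remains so after $\otimes_{\Z} R$. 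For general $M$ one either tensors the Artin--Schreier sequence with $M$ (which is flat over $\underline{\F_p}$) and repeats the argument, or observes that $HM$ is an $H\underline{\F_p}$-module and that $L_{\et,\mot}^\comp$ is symmetric monoidal, so $L_{\et,\mot}^\comp(HM)$ is a module over the zero ring $L_{\et,\mot}^\comp(H\underline{\F_p})$.

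Now let $E \in \Sp(\Sm_{\F_p,\et}^\comp)_{\ge 0}$. Then $E/p = E \otimes \1/p$ is connective and killed by $p^2$, so each homotopy sheaf $\underline{\pi}_i(E/p)$ is a finite iterated extension of sheaves of $\F_p$-vector spaces; hence $L_{\et,\mot}^\comp$ kills every Postnikov layer $\Sigma^i H\underline{\pi}_i(E/p)$ and therefore every bounded truncation $\tau_{\le m}(E/p)$. Applying $L_{\et,\mot}^\comp$ to the fiber sequences $\tau_{\ge m+1}(E/p) \to E/p \to \tau_{\le m}(E/p)$ and using exactness yields $L_{\et,\mot}^\comp(E/p) \simeq L_{\et,\mot}^\comp(\tau_{\ge m+1}(E/p))$ for every $m \ge 0$. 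It then remains to bound $L_{\et,\mot}^\comp$ from below on the $(m+1)$-connective, $p$-power-torsion object $\tau_{\ge m+1}(E/p)$, and the point is that here $L_{\et,\mot}^\comp$ drops connectivity by at most one step: testing on a geometric stalk $\Spec A$ with $A$ strictly henselian, $L_{\et,\mot}^\comp$ is computed by an iterated $\mathrm{Sing}$-type realization built from the complexes $R\Gamma_\et(\A^n_A, -)$, and since $A$ is strictly henselian of characteristic $p$ the $p$-cohomological dimension of $\A^n_A$ grows only linearly in $n$ (Artin--Schreier), so the skeletal filtration of that realization stays almost connective. Consequently $L_{\et,\mot}^\comp(E/p) \simeq L_{\et,\mot}^\comp(\tau_{\ge m+1}(E/p)) \in \Sp_{\ge m}$ for all $m$, which forces $L_{\et,\mot}^\comp(E/p) \in \Sp_{\ge -1}$ (in fact $\simeq 0$).

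I expect the main obstacle to be making this last connectivity estimate precise and uniform over all of $\Sm_{\F_p}$. The étale $\A^1$-localization does not visibly preserve connectivity --- its naive iterated $\mathrm{Sing}$-model need not converge, and filtered colimits of hypercomplete sheaves need not be hypercomplete --- so one must either carry out the geometric-stalk cohomological-dimension bookkeeping carefully, or invoke a Morel-type connectivity theorem for $\SH_\et^{\comp S^1}$ over a field of characteristic $p$. The loss of exactly one homotopy degree in the statement reflects the gap between testing equivalences of hypercomplete sheaves at all geometric points versus at generic (separably closed) ones.
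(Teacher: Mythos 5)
Your proposal has a genuine gap, and you yourself put your finger on it: the crucial step is the claim that $L_{\et,\mot}^\comp$ ``drops connectivity by at most one step'' on $p$-power-torsion objects, and you never actually prove it --- you sketch a stalkwise picture involving iterated $\mathrm{Sing}$-realizations and $p$-cohomological dimension of $\A^n_A$ and then acknowledge that making this ``precise and uniform'' is the main obstacle. But this estimate is not a side detail: it is essentially equivalent to the lemma itself (apply it with $m=0$ to $E/p$). The Artin--Schreier vanishing of $L_{\et,\mot}^\comp(HM)$ and the Postnikov-tower reduction are then superfluous detours, since once you have the connectivity estimate you are already done, and without it the Postnikov argument gives you nothing. (Those ingredients are genuinely useful, but in the paper they appear in the proof of Theorem \ref{thm:marc}, to pass from the $(-1)$-connectivity of this lemma to actual vanishing of $L_{\et,\mot}^\comp(\1/p)$, not inside the lemma.)

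The paper's proof sidesteps exactly the convergence worry you raise. The key observation is that $\cd(\F_p)<\infty$ implies that étale \emph{hypersheaves} of spectra on $\Sm_{\F_p}$ are closed under colimits computed in presheaves (this is \cite[Lemma 2.16]{bachmann-SHet}); hence the naive $\mathrm{Sing}$-construction $L_{\A^1}E\colon X\mapsto\colim_{n\in\Delta^\op}E(X\times\A^n)$ already lands in hypersheaves and \emph{does} compute $L_{\et,\mot}^\comp E$ --- no hypersheafification after the colimit is needed, and so no loss of control. Connectivity of a colimit is then immediate once each term is controlled, and for $X$ affine the sections $(E/p)(X\times\A^n)$ are $(-1)$-connective because affine $\F_p$-schemes have $p$-étale cohomological dimension $\le 1$ (Artin--Schreier, \cite[Th\'eor\`eme X.5.1]{sga4}). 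So the ``loss of one degree'' comes from this cohomological dimension bound on affines, not from any geometric-point-versus-generic-point discrepancy. If you want to repair your proposal, replace the hand-waved stalkwise estimate by the observation about colimits of hypersheaves under finite cohomological dimension, at which point the whole Postnikov apparatus can be dropped.
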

\begin{proof}
Denote by $L_{\A^1} E$ the presheaf \[ X \mapsto \colim_{n \in \Delta^\op} E(X \times \A^n). \]
Then $L_{\A^1} E$ is $\A^1$-invariant and $E \to L_{\A^1} E$ is an $\A^1$-equivalence \cite[Corollaries 2.3.5 and 2.3.8]{A1-homotopy-theory}.
Moreover since $\cd(\F_p) < \infty$, étale hypersheaves are closed under colimits in presheaves (see e.g. \cite[Lemma 2.16]{bachmann-SHet}), and thus $L_{\et,\mot}^\comp E \wequi L_{\A^1} E$.
Since $\Sp_{\ge -1}$ is closed under colimits, it thus suffices to show that for $X \in \Sm_{\F_p}$ affine we have $(E/p)(X) \in \Sp_{\ge -1}$.
This follows from \cite[Lemma 2.7(2)]{bachmann-SHet}, using that affine $\F_p$-schemes have $p$-étale cohomological dimension $\le 1$ \cite[Théorème X.5.1]{sga4}.
\end{proof}

\begin{proof}[Proof of Theorem \ref{thm:marc}.]
Only the first statement requires proof.
Since $\cd(\F_p) < \infty$, $\SH_\et^{\comp S^1}(\F_p)$ is compactly generated by representables \cite[Corollary 5.7]{bachmann-SHet} and thus $L_{\et,\mot}^\comp: \Sp(\Sm_{S,\et}^\comp) \to \Sp(\Sm_{S,\et}^\comp)$ preserves colimits.
Let $H\Z \in \Sp(\Sm_{S,\et}^\comp)_{\ge 0}$ denote the Eilenberg--MacLane spectrum.
We seek to prove that $L_{\et,\mot}^\comp(\1/p) = 0$.
By Lemma \ref{lem:lmot-conn} we have $L_{\et,\mot}^\comp(\1/p) \in \Sp(\Sm_{S,\et}^\comp)_{\ge -1}$, and hence $L_{\et,\mot}^\comp(\1/p) = 0$ if and only if $L_{\et,\mot}^\comp(\1/p) \wedge H\Z = 0$.
Since $L_{\et,\mot}^\comp$ preserves colimits and $H\Z$ lies in the subcategory generated under colimits by $\1$, we have $L_{\et,\mot}^\comp(\1/p) \wedge H\Z \wequi L_{\et,\mot}^\comp(H\Z/p)$.
The forgetful functor $U: \Mod_{H\Z}(\Sp(\Sm_{S,\et}^\comp)) \to \Sp(\Sm_{S,\et}^\comp)$ commutes with $L_{\et,\mot}^\comp$ (in fact $L_{\et,\mot}^\comp$ is given by $L_{\A^1}$ in both categories, see the proof of Lemma \ref{lem:lmot-conn}, and $U$ preserves colimits \cite[Corollary 4.2.3.5]{HA}), and the motivic localization of $\Mod_{H\Z}(\Sp(\Sm_{S,\et}^\comp))$ is $\DA_\et^{\comp S^1}(\F_p, \Z)$ (use \cite[Theorem 2.1.2.2]{SAG}).
Consequently we have reduced to proving that $\DA_\et^{\comp S^1}(\F_p, \Z/p) = *$, or equivalently that the unit of this symmetric monoidal category vanishes.
This works using the standard argument, i.e. the fiber sequence $\Z/p \to \mathbb{G}_a \to \mathbb{G}_a$.
\end{proof}

\bibliographystyle{alpha}
\bibliography{bibliography}

\begin{thebibliography}{GAV72}

\bibitem[AGV20]{ayoub2020six}
Joseph Ayoub, Martin Gallauer, and Alberto Vezzani.
\newblock The six-functor formalism for rigid analytic motives.
\newblock {\em arXiv preprint arXiv:2010.15004}, 2020.

\bibitem[Ayo14]{ayoub2014realisation}
Joseph Ayoub.
\newblock La r{\'e}alisation {\'e}tale et les op{\'e}rations de grothendieck.
\newblock {\em Ann. Sci. {\'E}c. Norm. Sup{\'e}r.(4)}, 47(1):1--145, 2014.

\bibitem[Bac18]{bachmann-real-etale}
Tom Bachmann.
\newblock Motivic and real étale stable homotopy theory.
\newblock {\em Compositio Mathematica}, 154(5):883–917, 2018.
\newblock \href{https://arxiv.org/abs/1608.08855}{arXiv:1608.08855}.

\bibitem[Bac20]{bachmann-SHet}
Tom Bachmann.
\newblock Rigidity in étale motivic stable homotopy theory.
\newblock {\em Accepted for publication in AGT}, 2020.
\newblock \href{https://arxiv.org/abs/1810.08028}{arXiv:1810.08028}.

\bibitem[Dug14]{dugger2014coherence}
Daniel Dugger.
\newblock Coherence for invertible objects and multigraded homotopy rings.
\newblock {\em Algebr. Geom. Topol.}, 14(2):1055--1106, 2014.

\bibitem[ES19]{elmanto2019scheiderer}
Elden Elmanto and Jay Shah.
\newblock Scheiderer motives and equivariant higher topos theory.
\newblock {\em arXiv preprint arXiv:1912.11557}, 2019.

\bibitem[GAV72]{sga4}
Alexander Grothendieck, M~Artin, and JL~Verdier.
\newblock Th{\'e}orie des topos et cohomologie {\'e}tale des sch{\'e}mas ({SGA}
  4).
\newblock {\em Lecture Notes in Mathematics}, 269, 1972.

\bibitem[Hoy15]{hoyois2015quadratic}
Marc Hoyois.
\newblock A quadratic refinement of the grothendieck--lefschetz--verdier trace
  formula.
\newblock {\em Algebraic \& Geometric Topology}, 14(6):3603--3658, 2015.

\bibitem[Hoy17]{hoyois-equivariant}
Marc Hoyois.
\newblock The six operations in equivariant motivic homotopy theory.
\newblock {\em Advances in Mathematics}, 305:197--279, 2017.

\bibitem[Lur17a]{HA}
Jacob Lurie.
\newblock Higher {A}lgebra.
\newblock 2017.
\newblock \newline {A}vailable at
  \href{http://www.math.ias.edu/~lurie/}{http://www.math.ias.edu/~lurie/}.

\bibitem[Lur17b]{HTT}
Jacob Lurie.
\newblock Higher {T}opos {T}heory.
\newblock 2017.
\newblock \newline {A}vailable at
  \href{http://www.math.ias.edu/~lurie/}{http://www.math.ias.edu/~lurie/}.

\bibitem[Lur18]{SAG}
Jacob Lurie.
\newblock Spectral {A}lgebraic {G}eometry.
\newblock 2018.
\newblock \newline {A}vailable at
  \href{http://www.math.ias.edu/~lurie/}{http://www.math.ias.edu/~lurie/}.

\bibitem[MV99]{A1-homotopy-theory}
Fabien Morel and Vladimir Voevodsky.
\newblock $\mathbb{A}^1$-homotopy theory of schemes.
\newblock {\em Publications Mathématiques de l'Institut des Hautes Études
  Scientifiques}, 90(1):45--143, 1999.

\bibitem[Sch94]{real-and-etale-cohomology}
Claus Scheiderer.
\newblock {\em Real and {\'E}tale Cohomology}, volume 1588 of {\em Lecture
  Notes in Mathematics}.
\newblock Springer, Berlin, 1994.

\end{thebibliography}

\end{document}